\theoremstyle{plain}
\newtheorem{theorem}{Theorem}[section]
\newtheorem{lemma}[theorem]{Lemma}
\newtheorem{conjecture}[theorem]{Conjecture}
\newtheorem{corollary}[theorem]{Corollary}
\theoremstyle{definition}
\newtheorem*{definition}{Definition}
\theoremstyle{remark}
\newcommand{\Z}{\mathbb{Z}}
\newcommand{\N}{\mathbb{N}}
\DeclareMathOperator{\re}{Re}
\begin{document}
\date{\today}

\title{Matrices related to Dirichlet series}

\author{David~A.~Cardon}

\address{Department of Mathematics, Brigham Young University, Provo, UT 84602, USA}

\email{cardon@math.byu.edu}

\maketitle

\begin{abstract}
We attach a certain $n \times n$ matrix $A_n$ to the Dirichlet
series $L(s)=\sum_{k=1}^{\infty}a_k k^{-s}$. We study the
determinant, characteristic polynomial, eigenvalues, and
eigenvectors of these matrices. The determinant of $A_n$ can be
understood as a weighted sum of the first $n$ coefficients of
the Dirichlet series $L(s)^{-1}$. We give an interpretation of
the partial sum of a Dirichlet series as a product of
eigenvalues. In a special case, the determinant of $A_n$ is the
sum of the M\"obius function. We disprove a conjecture of
Barrett and Jarvis regarding the eigenvalues of~$A_n$.
\end{abstract}

\section{Introduction}

To the Dirichlet series
\[
L(s) = \sum_{k=1}^{\infty} \frac{a_k}{k^s},
\]
we attach the $n \times n$ matrix
\[
D_n = \sum_{k=1}^{\infty} a_k E_n(k),
\]
where $E_n(k)$ is the $n \times n$ matrix whose $ij$th entry is
$1$ if $j=ki$ and $0$ otherwise. For example,
\[
D_6 =
\begin{pmatrix}
a_1 & a_2 & a_3 & a_4 & a_5 & a_6 \\
    & a_1 &     & a_2 &     & a_3 \\
    &     & a_1 &     &     & a_2 \\
    &     &     & a_1 &     &     \\
    &     &     &     & a_1 &     \\
    &     &     &     &     & a_1
\end{pmatrix}.
\]
Since
\[
E_n(k_1)E_n(k_2) = E_n(k_1 k_2)
\]
for every $k_1,k_2 \in \N$, formally manipulating linear
combinations of $E_n(k)$ is very similar to formally
manipulating Dirichlet series. However, because $E_n(k)$ is the
zero matrix whenever $k>n$, the sum defining $D_n$ is
guaranteed to converge. Of course, the $n \times n$ matrix
contains less information than the Dirichlet series. Letting
$n$ tend to infinity produces semi-infinite matrices, the
formal manipulation of which is exactly equivalent to formally
manipulating Dirichlet series.

Let $W_n$ be the matrix whose first column is the weight vector
$(0,w_2,w_3,\ldots,w_n)^{T}$ and whose other entries are zeros.
Define the $n \times n$ matrix $A_n$ (and the special cases
$B_n$ and $C_n$) by
\begin{equation} \label{eqn:definitionofABC}
\begin{split}
A_n & = W_n + D_n, \\
B_n & = W_n + D_n \quad \text{when $a_k=1$ for all $k$, }\\
C_n & = W_n + D_n \quad \text{when $a_k=1$ and $w_k=1$ for all $k$.}
\end{split}
\end{equation}
For example, $A_6$, $B_6$, and $C_6$ are the following three
matrices:
\[
\begin{pmatrix}
a_1  & a_2 & a_3 & a_4 & a_5 & a_6 \\
w_2  & a_1 &     & a_2 &     & a_3 \\
w_3  &     & a_1 &     &     & a_2 \\
w_4  &     &     & a_1 &     &     \\
w_5  &     &     &     & a_1 &     \\
w_6  &     &     &     &     & a_1
\end{pmatrix},
\begin{pmatrix}
  1  & 1 & 1 & 1 & 1 & 1 \\
w_2  & 1 &   & 1 &   & 1 \\
w_3  &   & 1 &   &   & 1 \\
w_4  &   &   & 1 &   &   \\
w_5  &   &   &   & 1 &   \\
w_6  &   &   &   &   & 1
\end{pmatrix},
\begin{pmatrix}
1  & 1 & 1 & 1 & 1 & 1 \\
1  & 1 &   & 1 &   & 1 \\
1  &   & 1 &   &   & 1 \\
1  &   &   & 1 &   &   \\
1  &   &   &   & 1 &   \\
1  &   &   &   &   & 1
\end{pmatrix}.
\]
We will always assume that $a_1=1$ since this ensures that the
Dirichlet series $\sum a_k k^{-s}$ has a formal inverse and
since this is true for many Dirichlet series that arise in
number theory. For notational convenience, we set $w_1=1$, and
occasionally we will write $a(i)$ instead of $a_i$. Several
authors have studied the matrices $B_n$ and $C_n$.
In~\cite{Redheffer1977}, it was observed that that
\begin{equation}\label{eqn:Redhefferdeterminant}
\det B_n = \sum_{k=1}^n w_k\mu(k),
\end{equation}
where $\mu$ is the M\"obius $\mu$-function. This is a special
case of the slightly more general fact (see
Theorem~\ref{thm:determinantofAn} below) that
\begin{equation} \label{eqn:determinantofA}
\det A_n = \sum_{k=1}^{n} w_k b_k,
\end{equation}
where the numbers $b_k$ are the coefficients of the formal
series
\[
L(s)^{-1} = \sum_{k=1}^{\infty} \frac{b_k}{k^{s}}.
\]
Thus, $\det A_n$ is a weighted sum of the coefficients of
$L(s)^{-1}$.

To obtain \eqref{eqn:Redhefferdeterminant} from
\eqref{eqn:determinantofA}, choose the Dirichlet series
to be the Riemann zeta function $\zeta(s)=\sum_{k=1}^{\infty}
k^{-s}$ so that $a_k=1$ for all $k$. This corresponds to the
case of the matrix $B_n$. Since
$\zeta(s)^{-1}=\sum_{k=1}^{\infty}
\mu(k)k^{-s}$, where $\mu$ is the M\"obius $\mu$-function, it
follows that $b_k=\mu(k)$. One particularly intriguing choice
for $w_k$ is $w_k=k^{-s}$. Then~\eqref{eqn:determinantofA}
results in the truncated Dirichlet series
\[
\det A_n = \sum_{k=1}^n \frac{b_k}{k^s}.
\]

As the asymptotic growth of sums of the type in
equation~\eqref{eqn:determinantofA} is important to analytic
number theory, representing those sums in terms of determinants
becomes very interesting.

Recall that the Riemann hypothesis is equivalent to the
statement
\[
\sum_{k=1}^n \mu(k) = O(n^{1/2+\epsilon}),
\]
for every positive $\epsilon$. Thus, the Riemann hypothesis is equivalent to
\[
\det C_n =O(n^{1/2+\epsilon}),
\]
for every positive $\epsilon$.

In~\cite{BarretForcadePollington1988}, Barrett, Forcade, and
Pollington expressed the characteristic polynomial of $C_n$ as
\begin{equation}\label{eqn:charpoly}
p_n(x) = (x-1)^{n-r-1} \left((x-1)^{r+1}-\sum_{k=1}^r v(n,k) (x-1)^{r-k}\right),
\end{equation}
where $r=\lfloor \log_2 n \rfloor$ and where the coefficients
$v(n,k)$ were described in terms of directed graphs. We will
refer to the eigenvalue $1$, whose multiplicity is $n-r-1$, as
the \textit{trivial} eigenvalue. The eigenvalues $\lambda
\not=1$ will be called \textit{nontrivial} eigenvalues.
In Theorem~\ref{thm:CharacteristicPolynomial} we extend this
result by determining the characteristic polynomial of the more
general matrix $A_n$. In~\cite{BarretForcadePollington1988}, it
was shown that the spectral radius $\rho(C_n)$ of $C_n$ is
asymptotic to $\sqrt{n}$.

Barrett and Robinson~\cite{RobinsonBarrett1989} determined that
the sizes of the Jordan blocks of $B_n$ corresponding to the trivial
eigenvalue $1$ are
\[
\lfloor\log_2(n/3)\rfloor+1, \lfloor\log_2(n/5)\rfloor+1,\ldots,
\lfloor\log_2(n/\{n\})\rfloor+1,
\]
where $\{n\}$ denotes the greatest odd integer $\leq n$.
Theorem~\ref{thm:eigenvectors} of this paper shows that each
nontrivial eigenvalue of $A_n$ is simple and expresses a basis
for the one-dimensional eigenspace  in terms of a recursion
involving the coefficients of $p_m(x)$ for $m<n$, enhancing our
understanding of the Jordan form of $A_n$.
Theorem~\ref{thm:eigenvectorstranspose} gives a similar result
for the transpose of $A_n$.

The coefficients of the characteristic polynomial of $C_n$ are
related to the Riemann zeta function as follows: If
$(\zeta(s)-1)^k$ is expressed as a Dirichlet series
$\sum_{m=1}^{\infty} \frac{c(m,k)}{m^s}$ so that
\[
\frac{1}{1 + \bigl(\zeta(s) - 1\bigr)} = \sum_{k=0}^{\infty} (-1)^k \bigl(\zeta(s) -1)^k
=\sum_{k=0}^{\infty} (-1)^k \left(\sum_{m=1}^{\infty} \frac{c(m,k)}{m^s}\right),
\]
then
\[
v(n,k) = \sum_{j \leq n} c(j,k).
\]
Evaluating $p_n(x)$ at $x=0$ gives the fundamental relationship
\[
\det C_n = \sum_{i=1}^n \mu(i) = \prod_{\text{$\lambda$ nontrivial}} \!\!\!\! \lambda
= \sum_{k=0}^{\lfloor \log_2 n\rfloor}(-1)^k v(n,k),
\]
where $v(n,0)$ is defined to equal $1$.

Barrett and Jarvis~\cite{BarrettJarvis1992} showed that $C_n$
has two large real eigenvalues $\lambda_{\pm}$ satisfying
\begin{equation}\label{eqn:asyptotic}
\lambda_{\pm} = \pm \sqrt{n} + \log \sqrt{n} + \gamma - 1/2 + O\left(\frac{\log^2 n}{\sqrt{n}}\right),
\end{equation}
and that the remaining $\lfloor \log_2 n\rfloor-1$ small
nontrivial eigenvalues satisfy
\[
|\lambda| < \log_{2 -\epsilon} n
\]
for any small positive $\epsilon$ and sufficiently large $n$.
Based on numerical evidence for various values of $n$ as large
as $n=10^6$, they also made the following two-part conjecture:

\begin{conjecture}[Barrett and Jarvis~\cite{BarrettJarvis1992}]
\label{conjecture}
The small nontrivial eigenvalues $\lambda$ of $C_n$ satisfy
\begin{enumerate}
\item[(i)]  $|\lambda|<1$, and
\item[(ii)] $\re(\lambda)<1$.
\end{enumerate}
\end{conjecture}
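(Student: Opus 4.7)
The plan is to disprove the conjecture by exhibiting an explicit value $n_0$ at which some small nontrivial eigenvalue of $C_{n_0}$ violates one of the two bounds. By~\eqref{eqn:charpoly}, the nontrivial eigenvalues of $C_n$ are precisely the roots of
\[
q_n(x) = (x-1)^{r+1} - \sum_{k=1}^r v(n,k)(x-1)^{r-k},
\]
with $r = \lfloor \log_2 n\rfloor$. Two of these roots are the large real eigenvalues $\lambda_{\pm} \approx \pm\sqrt{n}$ described in~\eqref{eqn:asyptotic}; the remaining $r-1$ roots are the \emph{small} nontrivial eigenvalues that the conjecture confines to the open unit disk and to the open half-plane $\re x < 1$.

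To locate a counterexample I would first compute the integers $v(n,k) = \sum_{j\leq n} c(j,k)$ exactly, using the combinatorial description of $c(j,k)$ as the number of ordered factorizations $j = d_1\cdots d_k$ into factors each at least $2$. For each $n$ in a chosen range I would then numerically solve $q_n(x)=0$, discard the two roots near $\pm\sqrt{n}$, and test the remaining roots against both parts of the conjecture. Since the conjecture is already known to be consistent with experiment up to $n = 10^6$, I would scan further, focusing on values of $n$ just above a power of $2$, where $q_n$ gains an additional root that may first surface near the boundary of the unit disk or across the line $\re x = 1$.

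The main obstacle is not discovery but certification: floating-point root-finding on its own cannot rigorously witness that an eigenvalue lies outside a closed disk or closed half-plane. To convert a numerical candidate $n_0$ into a proof, I would work with $q_{n_0}$ using exact arithmetic on the known integers $v(n_0,k)$. If the offending eigenvalue is real, I would exhibit two rationals $r_1 < r_2$, with $r_1 \geq 1$ or with $r_1 \leq -1$, at which $q_{n_0}$ takes opposite signs, so that the intermediate value theorem supplies a real root outside the forbidden region. If the eigenvalue is complex, I would instead apply the argument principle along a rational polygonal approximation of the unit circle, or of the vertical line $\re x = 1$, and compute the winding number exactly in rational arithmetic; a nonzero winding number relative to the conjectural region produces the desired contradiction. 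This plan converts the numerical observation into a rigorous disproof.
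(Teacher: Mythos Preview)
Your plan is the same strategy the paper uses: compute the coefficients $v(n,k)$ exactly, form the degree-$(r+1)$ factor $q_n(x)$, locate its roots, and exhibit an $n$ at which a small nontrivial eigenvalue falls outside the conjectured region. The paper reports that the first counterexample to part~(i) appears around $n=2^{33}$ and to part~(ii) only at $n=2^{36}$, so the search must go roughly four orders of magnitude beyond the $10^6$ already tested. The paper's one substantive addition over your outline is an efficient recursion for $v(n,k)$ based on the hyperbola method (splitting the sum $\sum_{i>1} v(\lfloor n/i\rfloor,k-1)$ at $i\approx\sqrt{n}$ and reindexing the tail over the $O(\sqrt{n})$ distinct values of $\lfloor n/i\rfloor$); without this, tabulating $d(j,k)$ for all $j\le 2^{36}$ directly is memory-prohibitive. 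Conversely, your discussion of rigorous certification via sign changes or an exact-arithmetic winding-number computation is a point the paper does not address at all---it simply reports floating-point root locations---so your version would actually be more careful on that front. In short: same approach, with the paper supplying the algorithmic speedup you would need and you supplying the verification rigor the paper omits.
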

The statement $\re(\lambda)<1$ is, of course, weaker than the
statement $|\lambda|<1$.

Vaughan~\cite{Vaughan1993} refined the asymptotic
formula~\eqref{eqn:asyptotic} for the two large eigenvalues and
showed, unconditionally, that the small eigenvalues satisfy
\[
|\lambda| \ll (\log n)^{2/5},
\]
and, upon the Riemann hypothesis, that the small eigenvalues
satisfy
\begin{equation} \label{eqn:RHbound}
|\lambda| \ll \log \log (2+n).
\end{equation}
He later showed~\cite{Vaughan1996} that $C_n$ has nontrivial
eigenvalues arbitrarily close to~$1$ for sufficiently large
$n$, suggesting that a proof of Conjecture~\ref{conjecture}
would likely be quite subtle.

Investigations of the Redheffer matrix have been extended to
group theory by Humphries~\cite{Humphries1997} and to partially
ordered sets by Wilf~\cite{Wilf2004}.

In Theorem~\ref{thm:disproof}, we resolve
Conjecture~\ref{conjecture} by showing that both parts are
false. There exist values of $n$ for which a small eigenvalue
$\lambda$ satisfies both $|\lambda|>1$ and $\re(\lambda)>1$. To
accomplish this we computed the characteristic polynomials for
$A_n$ for values of $n$ as large as $n=2^{36}$, which we
describe in~\S\ref{section:computingeigenvalues}.

\section{The determinant of $A_n$}

We now find the determinant of $A_n$.

\begin{theorem} \label{thm:determinantofAn}
Let $D_n$ be the Dirichlet matrix associated with the formal
Dirichlet series $L(s) = \sum_{k=1}^{\infty} a_k k^{-s}$ where
$a_1=1$, and write $L(s)^{-1}=\sum_{k=1}^{\infty} b_k k^{-s}$.
Let $W_n$ be the matrix whose first column is
$(0,w_2,\ldots,w_n)^T$ and whose other entries are zero. Let
$A_n = W_n+D_n$ as in~\eqref{eqn:definitionofABC}. Also, let
$\tilde{A}_n = W_n + D_n^{-1}$. Then
\begin{equation}
\det(A_n) = \sum_{k=1}^n w_k b_k \quad \text{and} \quad
\det(\tilde{A}_n) = \sum_{k=1}^n w_k a_k.
\end{equation}
\end{theorem}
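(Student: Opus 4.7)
The plan is to reduce the determinant of $A_n$ to the determinant of a rank-one perturbation of the identity, via the factorization $A_n = (I + W_n D_n^{-1})D_n$, and then apply the matrix determinant lemma.

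First I would observe that $D_n$ is upper triangular with diagonal entries all equal to $a_1 = 1$, so $\det D_n = 1$ and $D_n$ is invertible. The multiplicativity $E_n(k_1)E_n(k_2) = E_n(k_1 k_2)$ combined with $L(s)L(s)^{-1} = 1$ at the level of formal Dirichlet series forces $D_n^{-1} = \sum_{k=1}^{n} b_k E_n(k)$; in particular the first row of $D_n^{-1}$ is $(b_1, b_2, \ldots, b_n)$, since $E_n(k)$ puts a $1$ in position $(1,k)$.

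Next I would use the factorization
\[
A_n = W_n + D_n = (I + W_n D_n^{-1})\, D_n,
\]
so that $\det A_n = \det(I + W_n D_n^{-1})$. Since $W_n$ has only its first column nonzero, the matrix $W_n D_n^{-1}$ is the outer product $uv^T$ with $u = (0, w_2, \ldots, w_n)^T$ (the first column of $W_n$) and $v = (b_1, b_2, \ldots, b_n)^T$ (the first row of $D_n^{-1}$, transposed). The matrix determinant lemma then gives
\[
\det(I + uv^T) = 1 + v^T u = 1 + \sum_{k=2}^{n} w_k b_k = \sum_{k=1}^{n} w_k b_k,
\]
using the conventions $w_1 = 1$ and $b_1 = 1$. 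This establishes the first identity.

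For the second identity, I would apply the identical argument with the roles of $D_n$ and $D_n^{-1}$ swapped. Factoring $\tilde{A}_n = (I + W_n D_n)\,D_n^{-1}$ yields $\det \tilde{A}_n = \det(I + W_n D_n)$, and $W_n D_n$ is again a rank-one outer product $uv^T$ with $u$ as above and $v = (a_1,\ldots,a_n)^T$ the first row of $D_n$. The matrix determinant lemma then produces $\sum_{k=1}^n w_k a_k$. There is no real obstacle; the only point requiring care is the rank-one structure of $W_n D_n^{\pm 1}$, which follows immediately from $W_n$ having just one nonzero column, and the bookkeeping that $w_1 = 1$ and $b_1 = 1$ so that the sums start at $k=1$.
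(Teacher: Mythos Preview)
Your proof is correct and follows essentially the same approach as the paper's: both reduce $\det A_n$ to the determinant of a rank-one perturbation of the identity by multiplying through by $D_n^{-1}$. The only cosmetic difference is that the paper multiplies on the left to get $\det(D_n^{-1}W_n + I_n)$ and reads off the $(1,1)$-entry from the column structure, whereas you multiply on the right and invoke the matrix determinant lemma; these are two phrasings of the same computation.
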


\begin{corollary}
The choice $w_k=1$ produces partial sums of coefficients of
Dirichlet series:
\begin{equation}
\det A_n = \sum_{k=1}^n b_k
\quad \text{and} \quad
\det \tilde{A}_n = \sum_{k=1}^n a_k.
\end{equation}
\end{corollary}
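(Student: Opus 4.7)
The plan is to observe that this corollary is an immediate specialization of Theorem~\ref{thm:determinantofAn}, so no independent argument is required. Setting $w_k = 1$ for $k = 1, 2, \ldots, n$ turns the matrix $W_n$ into the matrix whose first column is $(0, 1, 1, \ldots, 1)^T$ and whose remaining entries are zero, which is a legitimate choice of weight vector. With this choice, the weighted sums $\sum_{k=1}^n w_k b_k$ and $\sum_{k=1}^n w_k a_k$ from the theorem collapse to the unweighted partial sums $\sum_{k=1}^n b_k$ and $\sum_{k=1}^n a_k$, respectively.

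The only step is therefore to write out the two specialized equations and cite the theorem. There is no genuine obstacle: the entire content of the corollary is the substitution $w_k \mapsto 1$. If I wanted to emphasize why this specialization is worth recording separately, I would remark that the partial sum $\sum_{k=1}^n a_k$ of the coefficients of $L(s)$ (and dually the partial sum $\sum_{k=1}^n b_k$ of the coefficients of $L(s)^{-1}$) is precisely the object whose asymptotic behavior is of interest in analytic number theory, as already noted in the introduction in connection with the Mertens function $\sum_{k=1}^n \mu(k)$ and the Riemann hypothesis. The determinantal formulas $\det A_n = \sum_{k=1}^n b_k$ and $\det \tilde{A}_n = \sum_{k=1}^n a_k$ thus encode these arithmetic partial sums as determinants of explicit sparse matrices, which is the feature that makes the corollary worth stating on its own.
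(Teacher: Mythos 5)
Your proof is correct and matches the paper's (implicit) treatment: the corollary is an immediate specialization of Theorem~\ref{thm:determinantofAn} obtained by setting $w_k = 1$, and the paper accordingly states it without further argument.
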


\begin{corollary}  \label{cor:truncatedDirichletseries}
The choice $w_k = k^{-s}$ gives truncations of the Dirichlet
series $L(s)^{-1}$ and $L(s)$:
\begin{equation}
\det A_n = \sum_{k=1}^n \frac{b_k}{k^s}
\quad \text{and} \quad
\det \tilde{A}_n = \sum_{k=1}^n \frac{a_k}{k^s}.
\end{equation}
\end{corollary}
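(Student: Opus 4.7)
The plan is straightforward: this corollary is an immediate specialization of Theorem~\ref{thm:determinantofAn}. Substituting $w_k = k^{-s}$ into the weight vector of $W_n$ produces the column $(0, 2^{-s}, 3^{-s}, \ldots, n^{-s})^T$, and the paper's convention $w_1 = 1$ is consistent with the formula since $1^{-s} = 1$. Feeding this into the two identities
\[
\det(A_n) = \sum_{k=1}^n w_k b_k, \qquad \det(\tilde{A}_n) = \sum_{k=1}^n w_k a_k,
\]
already established in Theorem~\ref{thm:determinantofAn}, yields the claimed formulas once $w_k = k^{-s}$ is pulled inside each sum.

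Since all the computational content lives in Theorem~\ref{thm:determinantofAn}, there is essentially no obstacle. The only point one might pause over is whether the weights are allowed to depend on an auxiliary parameter; but the theorem imposes no restriction on the $w_k$ beyond being (complex or formal) scalars, so permitting each $w_k$ to be a function of $s$ is unproblematic, and the determinant formulas hold for every fixed $s \in \C$ at which the entries make sense.

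The content of the corollary is therefore interpretive rather than computational. For each $s$, one obtains a finite $n \times n$ matrix $A_n = A_n(s)$ whose determinant is exactly the partial sum $\sum_{k=1}^n b_k/k^s$ of the Dirichlet series $L(s)^{-1}$, and similarly $\det \tilde{A}_n(s)$ recovers the $n$th partial sum of $L(s)$. This packages analytically interesting truncations of these Dirichlet series as determinants of an explicit, parametrized family of matrices, which is precisely the phenomenon motivating the rest of the paper.
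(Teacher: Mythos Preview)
Your proposal is correct and matches the paper's approach exactly: the corollary is stated without proof as an immediate specialization of Theorem~\ref{thm:determinantofAn}, obtained by substituting $w_k = k^{-s}$ into the two determinant formulas.
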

If $s$ is a complex number at which $L(s)$ and $L(s)^{-1}$
converge,
\[
\lim_{n \rightarrow \infty} \det A_n = L(s)^{-1}
\quad \text{and} \quad
\lim_{n \rightarrow \infty} \det \tilde{A}_n = L(s).
\]
So, Corollary~\ref{cor:truncatedDirichletseries} says that we
may interpret $\det A_n$ and $\det \tilde{A}_n$ as
approximating values of Dirichlet series. Since the determinant
is the product of the eigenvalues, this relates values of
Dirichlet series with eigenvalues of matrices.

\begin{proof}[Proof of Theorem~\ref{thm:determinantofAn}]
This is essentially the same argument as the one given in
Redheffer's note~\cite{Redheffer1977} where he found the
determinant of $B_n$. Since $D_n$ is upper triangular with
diagonal entry $1$, $\det D_n =\det D_n^{-1}=1$. Then
\[
\det A_n = \det D_n^{-1}\det A_n = \det D_n^{-1} \det(W_n + D_n)
=\det( D_n^{-1}W_n + I_n).
\]
The matrix $D_n^{-1} W_n$ has zeros in columns $2$ through $n$,
and its $(1,1)$-entry is $\sum_{k=2}^{n} w_k b_k$. Thus, $\det
A_n$ equals the $(1,1)$ entry of $D_n^{-1}W_n+I_n$ which is
$\sum_{k=1}^n w_k b_k$.  Replacing $D_n$ with $D_n^{-1}$ in the
argument gives $\det \tilde{A}_n = \sum_{k=1}^n w_k a_k$.
\end{proof}

\section{The characteristic polynomial of $A_n$}

The characteristic polynomial $p_n(x)=\det(I_n x-A_n)$ plays a
significant role. Previously, $p_n(x)$ was obtained for the
special case $C_n$ in~\cite{BarretForcadePollington1988}
and~\cite{Vaughan1993}. In this section, we will determine the
characteristic polynomial of the more general matrix $A_n$. The
following definition will be instrumental in describing both
the characteristic polynomial of $A_n$ and its eigenvectors.

\begin{definition}
For integers $n \geq 1$ and $k \geq 0$, we define $d(n,k)$ to
be the Dirichlet series coefficients of $\bigl(L(s)-1\bigr)^k$.
That is,
\begin{equation} \label{eqn:dnkdefinition}
\bigl( L(s)-1\bigr)^k
= \left(\sum_{k=2}^{\infty}\frac{a_n}{n^s}\right)^k
= \sum_{n=1}^{\infty} \frac{d(n,k)}{n^s}.
\end{equation}
We define $v(n,k)$ and $v_{\ell}(n,k)$ to be the weighted sums:
\begin{equation} \label{eqn:vnkdefinition}
\begin{split}
v(n,k) & = \sum_{j \leq n} w(j) d(j,k), \quad \text{and}\\
v_{\ell}(n,k) & = \sum_{j \leq n} w(j \ell) d(j,k).
\end{split}
\end{equation}
\end{definition}

Several cases of this definition are important to keep in mind:
$d(1,0)=1$ and $d(n,0)=0$ for $n>1$; also, both $d(n,k)$ and
$v(n,k)$ are zero if $n<2^k$ since a number smaller than $2^k$
cannot be written as a product of $k$ nontrivial factors.

From the definition of $d(n,k)$,
\[
\sum_{n=1}^{\infty} \frac{d(n,k)}{n^s}
=
\left(\sum_{k=2}^{\infty}\frac{a_n}{n^s}\right)\left(\sum_{k=2}^{\infty}\frac{a_n}{n^s}\right)^{k-1}
=
\left(\sum_{k=2}^{\infty}\frac{a_n}{n^s}\right)\left(\sum_{n=1}^{\infty} \frac{d(n,k-1)}{n^s}\right),
\]
which immediately gives the elementary recurrence relation:
\begin{lemma}  \label{lemma:dnkrecurrence}
If $k \geq 1$, then
\begin{equation} \label{eqn:dnkrecurrence}
d(n,k) = \sum_{\substack{i|n \\ 1< i}} a(i) d(n/i,k-1)
=\sum_{\substack{j|n \\ j<n}} a(n/j)d(j,k-1).
\end{equation}
\end{lemma}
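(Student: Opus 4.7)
The plan is to read the recurrence directly off the factorization $(L(s)-1)^k = (L(s)-1)\cdot(L(s)-1)^{k-1}$ that is already displayed immediately above the lemma statement. First I would substitute the expansions $L(s)-1 = \sum_{i \geq 2} a_i/i^s$ and $(L(s)-1)^{k-1} = \sum_{j \geq 1} d(j,k-1)/j^s$ into this product and then apply the standard Dirichlet convolution formula to combine them.

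Carrying out the multiplication yields
\[
(L(s)-1)^k = \sum_{n=1}^{\infty} \frac{1}{n^s} \sum_{\substack{ij=n \\ i \geq 2}} a(i)\, d(j,k-1).
\]
Comparing coefficients of $n^{-s}$ with the defining expansion $(L(s)-1)^k = \sum_{n \geq 1} d(n,k)/n^s$ gives the first equality, $d(n,k) = \sum_{i \mid n,\, i > 1} a(i)\, d(n/i,\, k-1)$. The second equality follows by the change of summation variable $j = n/i$: as $i$ runs over divisors of $n$ with $i > 1$, the index $j$ runs bijectively over divisors of $n$ with $j < n$, and the summand $a(i) d(n/i,k-1)$ becomes $a(n/j) d(j, k-1)$.

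There is no genuine obstacle here; the lemma is essentially a bookkeeping consequence of the displayed identity. The only points meriting a line of care are verifying the bijection $i \leftrightarrow n/i$ between divisors of $n$ exceeding $1$ and proper divisors of $n$, and checking the degenerate case $n = 1$, for which both sums are empty and correctly give $d(1,k) = 0$ for $k \geq 1$ (consistent with the remark that $d(n,k) = 0$ whenever $n < 2^k$, since a positive integer smaller than $2^k$ cannot be written as a product of $k$ factors each at least $2$).
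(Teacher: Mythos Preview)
Your proposal is correct and follows exactly the paper's approach: the paper derives the lemma directly from the displayed factorization $(L(s)-1)^k = (L(s)-1)\cdot(L(s)-1)^{k-1}$ by comparing Dirichlet coefficients, just as you do. Your added remarks on the substitution $j=n/i$ and the degenerate case $n=1$ simply make explicit what the paper leaves implicit.
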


\begin{theorem} \label{thm:CharacteristicPolynomial}
The characteristic polynomial $p_n(x) = \det(xI_n - A_n)$ is
\begin{equation}\label{eqn:CharacteristicPolynomial}
p_n(x) = (x-1)^{n-r-1} \Big((x-1)^{r+1}-\sum_{k=1}^r v(n,k) (x-1)^{r-k}\Big),
\end{equation}
where $r=\lfloor \log_2 n \rfloor$. Consequently, if
$v(n,r)\not=0$, the algebraic multiplicity of the trivial
eigenvalue $\lambda =1$ is $n-r-1$.
\end{theorem}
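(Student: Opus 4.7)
The plan is to exploit the rank-one structure $W_n = w e_1^T$, where $w = (0, w_2, \ldots, w_n)^T$ and $e_1$ is the first standard basis vector, so that $xI_n - A_n$ is a rank-one update of the triangular matrix $M := xI_n - D_n$. Because $a_1 = 1$, $M$ is upper triangular with $x-1$ on the diagonal, so $\det M = (x-1)^n$. The matrix determinant lemma then gives
\[
p_n(x) = \det\bigl(M - w e_1^T\bigr) = (x-1)^n\bigl(1 - e_1^T M^{-1} w\bigr),
\]
so everything reduces to computing the first row of $M^{-1}$.

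To do so I would write $M = (x-1)I_n - \tilde D_n$ with $\tilde D_n := D_n - I_n$ strictly upper triangular, and set $y = x-1$. The identity $E_n(k_1)E_n(k_2) = E_n(k_1 k_2)$ shows that products of $D_n$-type matrices mirror multiplication of the underlying Dirichlet series; applying this dictionary to $L(s) - 1$ in place of $L(s)$ should yield
\[
(\tilde D_n^k)_{1,j} = d(j,k) \quad \text{for all } 1 \leq j \leq n \text{ and } k \geq 0,
\]
and in particular $\tilde D_n^{\,r+1} = 0$, since $d(j,k) = 0$ whenever $j < 2^k$ while $j \leq n < 2^{r+1}$. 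The Neumann series for $M^{-1}$ then terminates:
\[
M^{-1} = \sum_{k=0}^{r} y^{-(k+1)}\, \tilde D_n^{\,k}, \qquad (M^{-1})_{1,j} = \sum_{k=0}^{r} \frac{d(j,k)}{y^{k+1}}.
\]

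Assembling the pieces, and using that the $(1,1)$-entry of $W_n$ is $0$ together with $d(j,0) = \delta_{j,1}$,
\[
e_1^T M^{-1} w = \sum_{j=2}^{n} w_j (M^{-1})_{1,j} = \sum_{k=1}^{r} \frac{1}{y^{k+1}} \sum_{j=2}^{n} w_j\, d(j,k) = \sum_{k=1}^{r} \frac{v(n,k)}{y^{k+1}},
\]
so that $p_n(x) = y^n - \sum_{k=1}^{r} v(n,k) y^{n-k-1}$, which factors as in~\eqref{eqn:CharacteristicPolynomial}. For the multiplicity statement I would substitute $x = 1$ into the inner bracket: all terms $(x-1)^{r-k}$ with $k < r$ vanish, leaving $-v(n,r) \neq 0$, so $(x-1)^{n-r-1}$ is the precise power of $(x-1)$ dividing $p_n(x)$.

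The only mildly delicate step is the identification $(\tilde D_n^k)_{1,j} = d(j,k)$. It amounts to the observation that $k \mapsto E_n(k)$ extends to an algebra morphism from truncated formal Dirichlet series into $n \times n$ matrices, with the caveat that entries indexed above $n$ are silently dropped. Since we only read off first-row entries at indices $j \leq n$, the truncation does no harm, and one can either verify the identity inductively using the recurrence of Lemma~\ref{lemma:dnkrecurrence} or invoke the monoid homomorphism directly.
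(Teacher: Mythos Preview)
Your argument is correct and takes a genuinely different route from the paper's. The paper proceeds by iterated cofactor expansion: first along the first column of $xI_n - A_n$, then recursively along the last column of each resulting minor $M_k(k \mid 1)$, obtaining a recurrence for $q_k(x) = (-1)^{k-1}\det M_k(k\mid 1)$ which is then identified with $t_k(x) = \sum_{j\geq 0} d(k,j)(x-1)^{k-j-1}$ via Lemma~\ref{lemma:dnkrecurrence}. You bypass this combinatorial recursion entirely by recognizing the rank-one structure $A_n = D_n + w e_1^T$ and applying the matrix determinant lemma, after which the finite Neumann expansion of $(xI_n - D_n)^{-1}$ in powers of the nilpotent $\tilde D_n = D_n - I_n$ delivers the coefficients $d(j,k)$ at once from the multiplicativity $E_n(k_1)E_n(k_2)=E_n(k_1k_2)$. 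This is cleaner and more structural: it makes transparent why the dictionary $L(s)-1 \leftrightarrow \tilde D_n$ governs the answer, and it replaces the somewhat ad hoc double cofactor expansion with a single algebraic identity. Two small points worth making explicit: the determinant lemma as you state it needs $M$ invertible, i.e.\ $x\neq 1$, and one then extends to all $x$ by observing that both sides are polynomials; and your nilpotency justification via $d(j,k)=0$ for $j<2^k$ literally addresses only the first row of $\tilde D_n^{\,r+1}$, though the same bound $j/i \leq n < 2^{r+1}$ handles every entry (and in any case only the first row is needed for $e_1^T M^{-1} w$).
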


\begin{proof}
We will use the cofactor expansion to calculate the
characteristic polynomial $p_n(x)=\det(xI_n-A_n)$.  Write
$M_n=xI_n-A_n$ and let
\[
M_n({i_1,\ldots,i_s} \mid {j_1,\ldots,j_{t}})
\]
denoted the matrix obtained by removing the rows indexed by
$i_1,\ldots,i_s$ and the columns indexed by $j_1,\ldots,j_t$
from $M_n$. The cofactor expansion of the determinant along the
first column is
\begin{equation*}
p_n(x) = (x-1)^n + \sum_{k=2}^n (-1)^{k} w_k \det M_n(k \mid 1).
\end{equation*}

The matrix $M_n(k \mid 1)$ is a block matrix whose upper left
$(k-1)\times(k-1)$ block is $M_k(k \mid 1)$, whose lower left
$(n-k)\times(k-1)$ block is zero, and whose lower right
$(n-k)\times(n-k)$ block is upper triangular with diagonal
entries $x-1$. Thus
\begin{equation*}
\det
M_n(k \mid 1)=(x-1)^{n-k} \det M_k(k \mid 1),
\end{equation*}
where we understand $\det M_1(1 \mid 1)$ to be $1$, and
\begin{equation} \label{eqn:charpoly1}
p_n(x) = (x-1)^n + \sum_{k=2}^n (-1)^{k} w_k (x-1)^{n-k}\det M_k(k \mid 1).
\end{equation}

The
$\ell$th entry in the last column of $M_k(k \mid 1)$ is
$-a_{n/\ell}$ if $\ell$ divides $k$; otherwise, it is zero.
Then the cofactor expansion of $\det M_k(k \mid 1)$ along the
last column is
\begin{equation*}
\det M_k(k \mid 1) = \sum_{\substack{\ell \mid k \\ \ell<k}}
(-1)^{k+\ell} a_{k/\ell} \det M_k(\ell,k \mid 1,k).
\end{equation*}
The matrix $M_k(\ell,k \mid 1, k)$ is also a block matrix.
Since the upper left $(\ell-1)\times(\ell-1)$ block is
$M_{\ell}(\ell
\mid 1)$, the lower left $(k-\ell-1) \times(\ell-1)$ block is zero,
and the lower right $(k-\ell-1)\times(k-\ell-1)$ block is upper
triangular with diagonal entries $x-1$,
\begin{equation*}
\det M_k(\ell,k \mid 1,k) = (x-1)^{k-\ell-1} \det M_\ell(\ell \mid 1).
\end{equation*}
This shows that
\begin{equation} \label{eqn:charpoly2}
\det M_k(k \mid 1) = \sum_{\substack{\ell \mid k \\ \ell<k}}
(-1)^{k+\ell} a_{k/\ell} (x-1)^{k-\ell-1}\det M_{\ell}(\ell \mid 1).
\end{equation}
In other words, the quantity $q_k(x)=(-1)^{k-1}M_k(k \mid 1)$
satisfies the recurrence relation:
\begin{equation}   \label{eqn:charpoly3}
\begin{split}
q_1(x) & = 1, \\
q_k(x) & = \sum_{\substack{\ell \mid k \\ \ell < k}} a_{k/\ell}
(x-1)^{k-\ell-1} q_{\ell}(x) \quad \text{for $k > 1$.}
\end{split}
\end{equation}

On the other hand, consider the polynomial $t_{\ell}(x)$
defined by
\begin{equation} \label{eqn:tsubk}
t_{\ell}(x)=\sum_{j \geq 0} d(\ell,j) (x-1)^{\ell-j-1}.
\end{equation}
Then $t_{1}(x)=1$. For $\ell>1$, the term in the sum
corresponding to $j=0$ is zero since $d(\ell,0)=0$ in that
case. For $k>1$, calculating the right hand side
of~\eqref{eqn:charpoly3} with $t_{\ell}(x)$ in place of
$q_{\ell}(x)$ and applying Lemma~\ref{lemma:dnkrecurrence}
gives
\begin{align*}
\sum_{\substack{\ell \mid k \\ \ell < k}} a_{k/\ell}
(x-1)^{k-\ell-1} t_{\ell}(x)
& =
\sum_{\substack{\ell \mid k \\ \ell < k}}
\sum_{j \geq 0} a_{k/\ell} d(\ell,j)(x-1)^{k-j-2}  \\
& =
\sum_{j \geq 0} d(k,j+1)(x-1)^{k-j-2}  \\
& =
\sum_{j \geq 1} d(k,j) (x-1)^{k-j-1} \\
& = t_k(x).
\end{align*}
Since $t_k(x)$ and $q_k(x)$ both satisfy the same recurrence
relations, they are equal.  This shows that
\[
(-1)^{k-1}M_k(k \mid 1) = q_k(x)=
t_{k}(x)=\sum_{j \geq 0} d(k,j) (x-1)^{k-j-1}.
\]
Substituting the last expression into~\eqref{eqn:charpoly1}
gives
\begin{align*}
p_n(x)
& = (x-1)^n - \sum_{k=2}^n \sum_{j \geq 1} w_k d(k,j)(x-1)^{n-j-1}\\
& = (x-1)^n - \sum_{j \geq 1} v(n,j) (x-1)^{n-j-1}.
\end{align*}
Since $v(n,j)=0$ for $j>r=\lfloor \log_2(n) \rfloor$, this is
\begin{align*}
p_n(x)
& = (x-1)^n - \sum_{j=1}^{r} v(n,j) (x-1)^{n-j-1} \\
& = (x-1)^{n-r-1}\Big( (x-1)^{r+1}-\sum_{j=1}^r v(n,j)(x-1)^{r-j}\Big),
\end{align*}
which proves the theorem.
\end{proof}

\section{The eigenvectors of $A_n$}

\begin{theorem}\label{thm:eigenvectors}
Let $\lambda \not=1$ be a nontrivial eigenvalue of $A_n$. Then
$\lambda$ is a simple eigenvalue, and a basis for the one
dimensional eigenspace of $A_n$ associated with $\lambda$ is
the vector
\[
u = \bigl[ \lambda-1,
X_{2}(\lfloor n/2\rfloor),
X_{3}(\lfloor n/3\rfloor),
X_{4}(\lfloor n/4\rfloor),\ldots,
X_{n}(\lfloor n/n\rfloor) \bigr]^{T}
\]
where
\[
X_{j}(q)  =
\sum_{k \geq 0} \frac{v_{j}(q, k)}{(\lambda-1)^k} =
1 + \frac{v_j(q,1)}{\lambda-1} + \frac{v_j(q,2)}{(\lambda-1)^2}
+\frac{v_j(q,3)}{(\lambda-1)^3}+\cdots
\]
\end{theorem}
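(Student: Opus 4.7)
The plan is to analyze the eigenvector equation $A_n u = \lambda u$ component by component and show that it admits a unique (up to scalar) solution equal to the stated $u$. The explicit structure of $A_n$ turns row~$1$ into
$$\sum_{j=2}^n a_j u_j = (\lambda - 1) u_1, \qquad (\dagger)$$
and row $i$, for $i \geq 2$, into
$$(\lambda - 1) u_i = w_i u_1 + \sum_{m=2}^{\lfloor n/i \rfloor} a_m\, u_{im}. \qquad (*)$$
Because $\lambda \neq 1$, relation $(*)$ is a downward recursion in $i$: for $n/2 < i \leq n$ the sum is empty and $u_i = w_i u_1/(\lambda - 1)$; descending through $n/3 < i \leq n/2$, then $n/4 < i \leq n/3$, and so on, every $u_i$ is uniquely determined by $u_1$. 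This forces the geometric multiplicity to be one, and normalizing $u_1 = \lambda - 1 \neq 0$ fixes a canonical basis.

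I would then substitute the candidate $u_i = X_i(q_i)$ with $q_i = \lfloor n/i \rfloor$ and verify $(*)$ directly. Expanding the series for $X_{im}(q_{im})$ and interchanging orders of summation, the right side of $(*)$ becomes
$$w_i(\lambda - 1) + \sum_{k \geq 0} \frac{1}{(\lambda - 1)^k} \sum_{m=2}^{q_i} a_m\, v_{im}(q_{im}, k).$$
The change of variables $\ell = jm$, where $j$ is the inner index in $v_{im}(q_{im}, k) = \sum_{j \leq q_{im}} w(jim)\, d(j, k)$, converts the double sum into $\sum_{\ell = 2}^{q_i} w(\ell i) \sum_{m \mid \ell,\, m > 1} a_m\, d(\ell/m, k)$; by Lemma~\ref{lemma:dnkrecurrence} the divisor sum collapses to $d(\ell, k+1)$, and because $d(1, k+1) = 0$ one may extend the range down to $\ell = 1$ harmlessly, producing $v_i(q_i, k+1)$. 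Together with the identity $v_i(q_i, 0) = w_i$, the right side reduces to $(\lambda - 1) X_i(q_i)$, matching $(\lambda - 1) u_i$.

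The row-$1$ equation $(\dagger)$ follows from the same identity applied without the leading $w_i u_1$ contribution. One obtains $\sum_{j=2}^n a_j X_j(q_j) = \sum_{k \geq 1} v(n, k)/(\lambda - 1)^{k-1}$, which truncates at $k = r = \lfloor \log_2 n \rfloor$ since $v(n, k) = 0$ for $k > r$. Setting this equal to $(\lambda - 1)^2$ and clearing denominators yields
$$(\lambda - 1)^{r+1} = \sum_{k=1}^r v(n, k)(\lambda - 1)^{r-k},$$
which is precisely the vanishing of the nontrivial factor of $p_n(x)$ at $x = \lambda$ given by Theorem~\ref{thm:CharacteristicPolynomial}; this identity holds because $\lambda \neq 1$ is by hypothesis a nontrivial eigenvalue. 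The further assertion that $\lambda$ is algebraically simple then reduces to showing that $\lambda$ is a simple root of that nontrivial factor, which in turn follows from a nonvanishing pairing of $u$ with the explicit left eigenvector supplied by the companion Theorem~\ref{thm:eigenvectorstranspose}.

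The main obstacle is the double-sum manipulation under the substitution $\ell = jm$: one must verify that the two truncations $q_i$ and $q_{im}$ interact correctly so that Lemma~\ref{lemma:dnkrecurrence} lifts each $d(\ell/m, k)$ to a single $d(\ell, k+1)$ uniformly in $k$, letting the series $X_i(q_i)$ close up. Once this combinatorial identity is secured, both the recursion $(*)$ and the scalar equation $(\dagger)$ drop out with essentially no further work.
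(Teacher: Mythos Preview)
Your approach is essentially the paper's: both verify $A_n u = \lambda u$ coordinatewise by expanding $X_{im}(q_{im})$, substituting $\ell = jm$, and invoking the recurrence $d(\ell,k+1)=\sum_{m\mid\ell,\,m>1} a_m\,d(\ell/m,k)$ from Lemma~\ref{lemma:dnkrecurrence}, then handling row~$1$ via the vanishing of the nontrivial factor of $p_n$ at $\lambda$. Your argument for one-dimensionality (the downward recursion $(*)$ determines $u_i$ from $u_1$) differs only cosmetically from the paper's (deleting the first row and column of $A_n-\lambda I$ leaves an invertible upper-triangular matrix); both amount to the same rank observation.

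One caution: the paper uses ``simple'' to mean \emph{geometrically} simple---its proof establishes only that the eigenspace is one-dimensional and says nothing about algebraic multiplicity. Your final paragraph reaches for algebraic simplicity via a nonvanishing left--right eigenvector pairing, but you never actually verify that $v^T u \neq 0$, and nothing in the setup forbids the nontrivial factor of $p_n$ from having a repeated root. So that last step is both unnecessary for matching the paper's theorem and, as written, unjustified; simply drop it and stop at geometric simplicity.
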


\begin{proof}[Proof of Theorem~\ref{thm:eigenvectors}]
For $i \geq 2$, the $i$th entry of $A_n u$ is
\begin{align*}
(A_n u)_i
& =
w_i (\lambda -1) + \sum_{1 \leq \ell \leq n/i} a_{\ell} u_{\ell i} \\
&  =
w_i(\lambda -1) + \sum_{1 \leq \ell \leq n/i} a_{\ell} X_{\ell i}(\lfloor n/(\ell i)\rfloor) \\
& =
w_i (\lambda-1) + \sum_{1 \leq \ell \leq n/i}a_{\ell} \sum_{\substack{k \geq 0\\ 1 \leq m \leq n/i}}
w(i \ell m) d(m,k)(\lambda-1)^{-k}   \\
& =
w_i (\lambda-1) + \sum_{k \geq 0} \Big(
\sum_{\substack{1 \leq \ell \leq n/i \\ 1 \leq m \leq n/(\ell i)}}
a(\ell) w(i \ell m) d(m,k) \Big) (\lambda-1)^{-k}  \\
& =
w_i (\lambda-1) + \sum_{k \geq 0} \Big(
\sum_{1 \leq t \leq n/i} w(it) \sum_{s \mid t} a(t/s) d(s,k)\Big)(\lambda-1)^{-k} \quad\text{[set $t=i \ell$]}\\
& =
w_i(\lambda-1) + \sum_{k \geq 0} \sum_{1 \leq t \leq n/i} w(it)\bigl(d(t,k) + d(t,k+1)\bigr) (\lambda-1)^{-k} \quad\text{[by~\eqref{eqn:dnkrecurrence}]}\\
& =
\sum_{k \geq 0} v_i(\lfloor n/i \rfloor,k)(\lambda-1)^{-k}
+w_i(\lambda-1) +\sum_{k \geq 1} v_i(\lfloor n/i \rfloor, k)(\lambda-1)^{-k+1} \\
& =
\sum_{k \geq 0} v_i(\lfloor n/i \rfloor,k)(\lambda-1)^{-k} + (\lambda-1)\sum_{k \geq 0} v_i(\lfloor n/i \rfloor,k)(\lambda-1)^{-k} \\
& = \lambda \sum_{k \geq 0} v_i(\lfloor n/i \rfloor,k)(\lambda-1)^{-k} \\
& = \lambda X_i( \lfloor n/i \rfloor) \\
& = \lambda u_i.
\end{align*}
In the calculation for $(A_n u)_i$ with $i \geq 2$, the term
$a_i u_{\ell i}$ when $\ell=1$ was equal to $a_i X_i(\lfloor
n/i\rfloor)$, but this term should be omitted from the case
$i=1$. Taking this into account and going to the second to last
step of the previous calculation gives
\begin{align*}
(A_n u)_1
& =
\lambda X_1(n) - X_1(n) \\
& = (\lambda -1)\Bigl(1+\sum_{k \geq 1} v(n,k) (\lambda-1)^{-k}\Bigr) \\
& = (\lambda -1)\bigl[ 1+(\lambda-1)\bigr] \quad \text{by Theorem~\ref{thm:CharacteristicPolynomial}} \\
& = \lambda(\lambda-1) \\
& = \lambda u_1.
\end{align*}
This shows that the vector $u$ is a nonzero eigenvector for
$\lambda$.

To see why the eigenspace of $\lambda$ is one-dimensional,
consider the submatrix of $A_n - \lambda I$ obtained by
deleting the first row and column. This $(n-1)\times(n-1)$
matrix is upper triangular with nonzero entries on the
diagonal. Hence, it is invertible implying that the rank of
$A_n - \lambda I_n$ is $\geq n-1$. Since we found a nontrivial
eigenvector, the nullity is $\geq 1$. So, the nullity of
$A_n-\lambda I$ must be exactly one. This completes the proof.
\end{proof}

\begin{theorem}\label{thm:eigenvectorstranspose}
Let $\lambda \not=1$ be a nontrivial eigenvalue of $A_n$. A
basis for the one dimensional eigenspace of $A_n^T$ associated
with $\lambda$ is the vector
\begin{equation} \label{eqn:eigenvectortranspose}
v = \bigl[ 1,Y_{\lambda}(2),Y_{\lambda}(3),\ldots,Y_{\lambda}(n)\bigr]^T.
\end{equation}
where
\[
Y_{\lambda}(q) = \sum_{k \geq 0} \frac{d(q,k)}{(\lambda-1)^k} =
d(q,0)+\frac{d(q,1)}{\lambda-1}+\frac{d(q,2)}{(\lambda-1)^2}
+\cdots.
\]
\end{theorem}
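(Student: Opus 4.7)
The plan is to verify directly that $A_n^T v = \lambda v$ by splitting into two cases: coordinates $i \geq 2$ and coordinate $i = 1$. The first case is driven by the recurrence in Lemma~\ref{lemma:dnkrecurrence}, while the first coordinate is where the characteristic polynomial from Theorem~\ref{thm:CharacteristicPolynomial} comes in. Once both hold, $v$ is manifestly nonzero (its first entry is $1$), and uniqueness of the eigenvector up to scalar follows at once from Theorem~\ref{thm:eigenvectors}, since a matrix and its transpose share the same Jordan structure.

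First I would write out the entries of $A_n^T$ explicitly: the first column is $(a_1,a_2,\ldots,a_n)^T$, the first row contains $a_1$ followed by $w_2,\ldots,w_n$, and off the first row/column the $(i,j)$ entry is $a_{i/j}$ when $j\mid i$ and $0$ otherwise. For $i\geq 2$, this gives
\[
(A_n^T v)_i = a_i + \sum_{\substack{j\mid i\\ 2\leq j\leq i}} a_{i/j}\, Y_\lambda(j).
\]
The single term $j=i$ contributes $Y_\lambda(i)$; for the remaining $j$ with $2\leq j<i$, I would interchange the order of summation over $j$ and over $k$ in the series defining $Y_\lambda(j)$. The inner sum becomes $\sum_{j\mid i,\, j<i} a_{i/j} d(j,k)$ (with the $j=1$ term added in, since it vanishes for $k\geq 1$), and Lemma~\ref{lemma:dnkrecurrence} identifies this as $d(i,k+1)$. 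After re-indexing the resulting series and using $a_i=d(i,1)$ to absorb the stray $a_i$, one obtains $\sum_{m\geq 0} d(i,m+1)(\lambda-1)^{-m}$, which is exactly $(\lambda-1)Y_\lambda(i)$. Adding back the $Y_\lambda(i)$ from the $j=i$ term yields $\lambda Y_\lambda(i)=\lambda v_i$.

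For the first coordinate, the computation is
\[
(A_n^T v)_1 = 1 + \sum_{j=2}^n w_j Y_\lambda(j) = 1 + \sum_{k\geq 1} \frac{v(n,k)}{(\lambda-1)^k},
\]
after interchanging sums and observing that the $k=0$ layer drops out because $d(j,0)=0$ for $j\geq 2$, while the $j=1$ terms contribute nothing for $k\geq 1$ because $d(1,k)=0$ there. Here is the one place I would invoke Theorem~\ref{thm:CharacteristicPolynomial}: since $\lambda\neq 1$ is a root of $p_n(x)$, the nontrivial factor vanishes, giving $(\lambda-1)^{r+1}=\sum_{k=1}^r v(n,k)(\lambda-1)^{r-k}$, and dividing through by $(\lambda-1)^r$ produces $\sum_{k\geq 1}v(n,k)(\lambda-1)^{-k}=\lambda-1$. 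Substituting back yields $(A_n^T v)_1=\lambda=\lambda v_1$.

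The main obstacle is ensuring that the interchange of the $k$-sum and the divisor sum lines up cleanly with the recurrence: one has to be careful about which boundary terms ($j=1$, $j=i$, $k=0$) contribute, and about the index shift $k\mapsto k+1$ that turns $Y_\lambda(i)$ into $(\lambda-1)Y_\lambda(i)$. Every such sum is finite, since $d(j,k)$ vanishes for $k>\log_2 j$, so no convergence issues arise. Finally, simplicity of the eigenvalue and one-dimensionality of the eigenspace of $A_n^T$ at $\lambda$ are immediate: $A_n^T$ has the same characteristic polynomial as $A_n$, so Theorem~\ref{thm:eigenvectors} already forces the geometric multiplicity of $\lambda$ to equal $1$ for $A_n^T$ as well.
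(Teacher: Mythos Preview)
Your proposal is correct and follows essentially the same approach as the paper: direct verification of $A_n^T v = \lambda v$ via the recurrence of Lemma~\ref{lemma:dnkrecurrence} for $i\geq 2$ and the characteristic-polynomial identity from Theorem~\ref{thm:CharacteristicPolynomial} for $i=1$. The only differences are cosmetic---you isolate the $j=i$ term and the stray $a_i$ before invoking the recurrence, whereas the paper keeps the full divisor sum together to obtain $d(i,k)+d(i,k+1)$ in one stroke, and you cite the shared Jordan structure of $A_n$ and $A_n^T$ for one-dimensionality rather than repeating the rank argument from Theorem~\ref{thm:eigenvectors}.
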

Interestingly, the algebraic expression for $v$ does not
explicitly rely on the symbols $w_2,\ldots,w_n$ in the first
column of $A_n$. However, altering $w_2,\ldots,w_n$ changes the
possible numeric values of $\lambda$.

\begin{proof}[Proof of Theorem \ref{thm:eigenvectorstranspose}]
For $i \geq 2$,the $i$th entry of $A_n^T v$ is
\begin{align*}
(A_n^T v)_i
& =
\sum_{\ell \mid i} a(i/\ell) Y(\ell) \\
& =
\sum_{k \geq 0} \sum_{\ell \mid i} a(i/\ell) d(\ell,k)(\lambda-1)^{-k} \\
& =
\sum_{k \geq 0} \bigl[ d(i,k) + d(i,k+1) \bigr] (\lambda-1)^{-k} \quad \text{by \eqref{eqn:dnkrecurrence}} \\
& =
Y(i) + (\lambda-1) \sum_{k \geq 1} d(i,k)(\lambda-1)^{-k} \\
& =
Y(i) + (\lambda-1) Y(i) \quad \text{[since $d(i,0)=0$]}\\
& =
\lambda Y(i).
\end{align*}
The first entry of $A_n^T v$ is
\begin{align*}
(A_n^T)_i
& =
\sum_{1 \leq j \leq n} w_j Y(j) \\
& =
\sum_{k \geq 0} \sum_{1 \leq j \leq n} w_j d(j,k) (\lambda-1)^{-k} \\
& =
\sum_{k \geq 0} v(n,k) (\lambda-1)^{-k} \\
& =
1+ \sum_{k \geq 1} v(n,k)(\lambda-1)^{-k} \\
& =
1+(\lambda-1) \quad \text{[by Theorem~\ref{thm:CharacteristicPolynomial}]} \\
& = \lambda v_1.
\end{align*}
This shows that $v=[Y(1),\ldots,Y(n)]^T$ is a nonzero
eigenvector of $A_n^T$. The dimension of the eigenspace is one,
as explained in the proof of Theorem~\ref{thm:eigenvectors}.
\end{proof}

\section{Computing eigenvalues of $C_n$ for large $n$}
\label{section:computingeigenvalues}

Theorem~\ref{thm:CharacteristicPolynomial} expresses the
characteristic polynomial of the matrix $A_n$ in terms of the
numbers $v(n,k)$. In this section, we will explain how to
explicitly calculate the characteristic polynomial $p_n(x)$ for
large values of $n$ for the special case $C_n$ in which
$w_i=a_i=1$ for all $i$. The method given below in
Theorem~\ref{thm:vnkrecursion2} was used to find $p_n(x)$ for
$n$ as large as $n=2^{36}$ in a few hours on a desktop
computer. To accomplish this, it is necessary to use a more
efficient algorithm for finding the coefficients than a brute
force approach based directly on the definition of matrix
$C_n$.  Even with Theorem~\ref{thm:CharacteristicPolynomial} we
need a better method for computing $v(n,k)$ than the direct
application of the definition of $v(n,k)$
in~\eqref{eqn:vnkdefinition}.

\begin{lemma}
Suppose $a_{\ell}=w_\ell=1$ for all $\ell$. If $1 \leq 2^k \leq
n$, then
\begin{equation}\label{eqn:vnkrecursion1}
v(n,k) = \sum_{i>1}v\bigl( \lfloor \tfrac{n}{i} \rfloor, k-1\bigr)
=
\sum_{j<n} \big(\big\lfloor\tfrac{n}{j}\big\rfloor-\big\lfloor\tfrac{n}{j+1}\big\rfloor\big)v(j,k-1).
\end{equation}
\end{lemma}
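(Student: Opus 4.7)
The plan is to derive the first equality by a direct double-summation swap, and then collapse the outer sum into the second (Dirichlet-hyperbola-style) form by grouping indices $i$ according to the value of $\lfloor n/i\rfloor$.

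First I would unpack the definitions under the hypothesis $a_\ell=w_\ell=1$. With these choices, \eqref{eqn:vnkdefinition} specializes to $v(n,k)=\sum_{j\le n}d(j,k)$. Substituting Lemma~\ref{lemma:dnkrecurrence} (with $a(i)=1$) gives
\[
v(n,k)=\sum_{j\le n}\sum_{\substack{i\mid j\\ i>1}}d(j/i,k-1).
\]
Interchanging the order of summation and writing $j=im$ turns this into
\[
v(n,k)=\sum_{i>1}\sum_{m\le \lfloor n/i\rfloor}d(m,k-1)=\sum_{i>1}v\bigl(\lfloor n/i\rfloor,k-1\bigr),
\]
which is the first equality. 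The sum is effectively finite because $v(q,k-1)=0$ whenever $q<2^{k-1}$, and in particular when $i>n$ so that $\lfloor n/i\rfloor=0$.

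For the second equality I would partition the range of $i>1$ according to the common value $j=\lfloor n/i\rfloor$. The integers $i$ with $\lfloor n/i\rfloor=j$ are precisely those satisfying $n/(j+1)<i\le n/j$, of which there are $\lfloor n/j\rfloor-\lfloor n/(j+1)\rfloor$. The restriction $i>1$ forces $j<n$, since $i=1$ is the unique index giving $j=n$; conversely, for every $j<n$ none of the $i$'s in the corresponding block equals $1$. Hence
\[
\sum_{i>1}v\bigl(\lfloor n/i\rfloor,k-1\bigr)=\sum_{j<n}\bigl(\lfloor n/j\rfloor-\lfloor n/(j+1)\rfloor\bigr)v(j,k-1),
\]
completing the proof. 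There is no real obstacle here; the only point requiring a moment of care is the bookkeeping that confirms the condition $i>1$ matches the summation range $j<n$ in the second form.
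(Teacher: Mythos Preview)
Your proof is correct. The only difference from the paper is in how you argue the first equality: the paper invokes the combinatorial interpretation \eqref{eqn:vnkgeometric} of $v(n,k)$ as a lattice-point count and peels off one coordinate $\ell_k=i$, whereas you work algebraically from the definition $v(n,k)=\sum_{j\le n}d(j,k)$ together with the recurrence of Lemma~\ref{lemma:dnkrecurrence} and a swap of summation. These are two presentations of the same ``remove one factor'' idea; your version has the mild advantage of not needing the separate combinatorial description, while the paper's is a one-line appeal to \eqref{eqn:vnkgeometric}. For the second equality your grouping-by-value-of-$\lfloor n/i\rfloor$ argument, including the check that $i>1$ corresponds exactly to $j<n$, is the same as the paper's.
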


If both $a_k=1$ and $w_k=1$ for all $k$, then $v(n,k)$
represents the number of ways to form products of $k$
nontrivial factors whose product is $\leq n$ and where order
matters. In this case, $v(n,k)$ represents a count of lattices
points in $k$-dimensional space:
\begin{equation} \label{eqn:vnkgeometric}
v(n,k) = \big|\{(\ell_1,\ldots,\ell_k) \in \Z^k \, : \;
\text{$\ell_1 \ell_2 \cdots \ell_k \leq n$ and $\ell_i \geq 2$ for
all $i$} \}\big|.
\end{equation}

\begin{proof}
The first equality in~\eqref{eqn:vnkrecursion1} is evident
from~\eqref{eqn:vnkgeometric} by letting one component of
$(\ell_1,\ldots,\ell_k)$, say $\ell_k$, be the index of
summation $i$. The second equality in~\eqref{eqn:vnkrecursion1}
is obtained by re-indexing the sum over the distinct values of
$j=\lfloor n/i\rfloor$. For a given positive integer $j$,
\[
j = \left\lfloor \frac{n}{i}\right\rfloor
\quad \Leftrightarrow \quad
j \leq \frac{n}{i} < j+1
\quad \Leftrightarrow \quad
\frac{n}{j+1} < i \leq \frac{n}{j}.
\]
Thus, the number of distinct $i$ for which $\lfloor n/i \rfloor
= j$ is $\lfloor \tfrac{n}{j}\rfloor -\lfloor
\tfrac{n}{j+1}\rfloor$.
\end{proof}

The first recursion formula in~\eqref{eqn:vnkrecursion1} is
computationally inefficient since there can be many distinct
values of $i_1$ and $i_2$ such that $\lfloor n/i_1 \rfloor =
\lfloor n/i_2\rfloor$. The second is inefficient since there
can be many values of $j$ such that  $\lfloor n/j \rfloor -
\lfloor n/(j+1)\rfloor$ is zero.
The next theorem helps to remove this redundancy by rewriting
the summation to have significantly fewer terms.

\begin{theorem} \label{thm:vnkrecursion2}
Assume $a_{\ell}=w_{\ell}=1$ for all $\ell$. Suppose $1 \leq
2^k \leq n$ and $k \geq 1$. Then
\begin{equation} \label{eqn:vnkrecursion2}
v(n,k) =
\sum_{i=2}^{s}
v\bigl( \bigl\lfloor \tfrac{n}{i} \bigr\rfloor, k-1\bigr)
+
\sum_{j=2^{k-1}}^{\lfloor \sqrt{n}\rfloor}
\bigl(\bigl\lfloor \tfrac{n}{j}\bigr\rfloor -
\bigl\lfloor \tfrac{n}{j+1}\bigr\rfloor  \bigr)\,v( j, k-1),
\end{equation}
where
$s =\bigl\lfloor\frac{n}{\lfloor\sqrt{n}\rfloor+1}\bigr\rfloor$.
\end{theorem}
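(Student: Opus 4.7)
The plan is to start from the first form of the recursion in the preceding lemma, namely
\[
v(n,k) = \sum_{i \geq 2} v\bigl(\bigl\lfloor n/i \bigr\rfloor, k-1\bigr),
\]
and split the range of summation at the threshold $i = s$, where $s = \bigl\lfloor n/(\lfloor\sqrt{n}\rfloor + 1)\bigr\rfloor$. The "small $i$" block stays as written, producing the first sum in \eqref{eqn:vnkrecursion2}. The "large $i$" block gets re-indexed by the value $j = \lfloor n/i\rfloor$ using the same counting device that appeared in the proof of the preceding lemma.

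The key observation, which I would verify first, is that $s$ is precisely designed so that $\lfloor n/i\rfloor$ crosses the value $\lfloor\sqrt{n}\rfloor$ exactly between $i = s$ and $i = s+1$. Specifically, from $s \leq n/(\lfloor\sqrt{n}\rfloor + 1)$ one gets $\lfloor n/i\rfloor \geq \lfloor\sqrt{n}\rfloor + 1$ for every $2 \leq i \leq s$, while from $s + 1 > n/(\lfloor\sqrt{n}\rfloor + 1)$ one gets $\lfloor n/i\rfloor \leq \lfloor\sqrt{n}\rfloor$ for every $i \geq s+1$. This produces the clean partition we want.

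For the tail sum over $i \geq s+1$, I would re-index by $j = \lfloor n/i\rfloor$, which by the previous paragraph now ranges over $1 \leq j \leq \lfloor\sqrt{n}\rfloor$. Exactly as in the proof of the preceding lemma, for each fixed $j$ in that range the number of $i$ with $\lfloor n/i\rfloor = j$ equals $\lfloor n/j\rfloor - \lfloor n/(j+1)\rfloor$, and every such $i$ lies in $[s+1, n]$. Finally, because $v(j, k-1) = 0$ whenever $j < 2^{k-1}$ (no product of $k-1$ factors each $\geq 2$ can be $\leq j < 2^{k-1}$), the terms with $j < 2^{k-1}$ vanish, and the lower limit can be raised to $j = 2^{k-1}$. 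Combining the two pieces yields \eqref{eqn:vnkrecursion2}.

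The one place that requires genuine care, and hence the main obstacle, is the boundary matching at $i = s$: one must check that no index $i \leq s$ produces a value of $\lfloor n/i\rfloor$ that already lies in the range $[1,\lfloor\sqrt{n}\rfloor]$ and conversely that no index $i \geq s+1$ produces a value above $\lfloor\sqrt{n}\rfloor$. This is a direct arithmetic consequence of the definition of $s$, but is the step that justifies why the two sums in \eqref{eqn:vnkrecursion2} together recover every term of the original sum exactly once. Everything else is the same bookkeeping already established in the proof of the preceding lemma.
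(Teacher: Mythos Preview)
Your proposal is correct and follows essentially the same route as the paper: start from the lemma's recursion $v(n,k)=\sum_{i>1}v(\lfloor n/i\rfloor,k-1)$, split according to whether $\lfloor n/i\rfloor$ exceeds $\lfloor\sqrt{n}\rfloor$, identify the cutoff with $i=s$ via the floor inequalities, and re-index the tail by $j=\lfloor n/i\rfloor$ using the count $\lfloor n/j\rfloor-\lfloor n/(j+1)\rfloor$. The only cosmetic difference is that you fix the split at $i=s$ first and then verify the threshold in $\lfloor n/i\rfloor$, whereas the paper splits on $\lfloor n/i\rfloor$ first and deduces $s$; your explicit remark that $v(j,k-1)=0$ for $j<2^{k-1}$ justifies the lower limit, which the paper handles by restricting $i\le\lfloor n/2^{k-1}\rfloor$ at the outset.
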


\begin{proof}
This argument applies the hyperbola method from analytic number
theory. Rewrite~\eqref{eqn:vnkrecursion1} as
\begin{equation} \label{eqn:twotermsum}
v(n,k) =
\sum_{\lfloor n/i \rfloor \geq \lfloor\sqrt{n}\rfloor+1}
\!\!\!\!\! v\big(\big\lfloor \tfrac{n}{i} \big\rfloor, k-1\big)
+
\sum_{\lfloor n/i \rfloor  \leq \lfloor\sqrt{n}\rfloor}
\!\!\! v\bigl( \bigl\lfloor \tfrac{n}{i} \bigr\rfloor, k-1\bigr),
\end{equation}
where the index $i$ in each summation satisfies $2 \leq i \leq
\lfloor n/2^{k-1}\rfloor$.
In the first summation, since both $i$ and
$\lfloor\sqrt{n}\rfloor+1$ are integers,
\[
\left\lfloor\frac{n}{i}\right\rfloor \geq \lfloor \sqrt{n}\rfloor+1
\ \Leftrightarrow \
\frac{n}{i} \geq \lfloor\sqrt{n}\rfloor+1
\ \Leftrightarrow \
i \leq \frac{n}{\lfloor\sqrt{n}\rfloor+1}
\ \Leftrightarrow \
i \leq \left\lfloor \frac{n}{\lfloor\sqrt{n}\rfloor+1}\right\rfloor.
\]
This gives the value $s=\lfloor n /(\lfloor
\sqrt{n}\rfloor+1)\rfloor$ in the first summation
in equation~\eqref{eqn:vnkrecursion2}. In the second summation
in~\eqref{eqn:twotermsum}, we re-index the sum over the
distinct values of $j=\lfloor
n/i\rfloor\leq\lfloor\sqrt{n}\rfloor$. For a given positive
integer $j$,
\[
j = \left\lfloor \frac{n}{i}\right\rfloor
\quad \Leftrightarrow \quad
j \leq \frac{n}{i} < j+1
\quad \Leftrightarrow \quad
\frac{n}{j+1} < i \leq \frac{n}{j}.
\]
Thus, the number of distinct $i$ for which $\lfloor n/i \rfloor
= j$ is $\lfloor \tfrac{n}{j}\rfloor -\lfloor
\tfrac{n}{j+1}\rfloor$, allowing the second summation
in~\eqref{eqn:twotermsum} to be written as
\[
\sum_{j=2^{k-1}}^{\lfloor \sqrt{n}\rfloor}
\bigl(\bigl\lfloor \tfrac{n}{j}\bigr\rfloor -
\bigl\lfloor \tfrac{n}{j+1}\bigr\rfloor  \bigr)\,v( j, k-1).
\]
This proves~\eqref{eqn:vnkrecursion2}.
\end{proof}
It is interesting to note that $s$ in
Lemma~\ref{thm:vnkrecursion2} is equal to either
$\lfloor\sqrt{n}\rfloor$ or $\lfloor\sqrt{n}\rfloor-1$
according to
\[
s =
\left\lfloor\frac{n}{\lfloor\sqrt{n}\rfloor+1}\right\rfloor
=
\begin{cases}
\lfloor\sqrt{n}\rfloor & \text{if $n-\lfloor\sqrt{n}\rfloor^2 \geq \lfloor\sqrt{n}\rfloor$}, \\
\lfloor\sqrt{n}\rfloor-1 & \text{if $n-\lfloor\sqrt{n}\rfloor^2 < \lfloor\sqrt{n}\rfloor$}.
\end{cases}
\]

\begin{theorem} \label{thm:disproof}
Conjecture~\ref{conjecture} is false. There exist values of $n$
for which a small eigenvalue $\lambda$  of $C_n$ satisfies both
$|\lambda|>1$ and $\re(\lambda)>1$.
\end{theorem}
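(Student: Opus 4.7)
The plan is to exhibit explicit values of $n$ together with small eigenvalues of $C_n$ that refute both parts of the conjecture, and then to certify those counterexamples rigorously. By Theorem~\ref{thm:CharacteristicPolynomial} applied with $a_k=w_k=1$, the nontrivial eigenvalues $\lambda$ of $C_n$ are exactly $\lambda = 1+y$ where $y$ runs over the roots of
\[
q_n(y) = y^{r+1} - \sum_{k=1}^{r} v(n,k)\, y^{r-k}, \qquad r=\lfloor \log_2 n\rfloor.
\]
In this parametrization, part (i) of the conjecture says $|1+y|<1$ and part (ii) says $\re(y)<0$. Of the $r+1$ roots, two are the large ones with $y_{\pm} \approx \pm\sqrt{n}$ by~\eqref{eqn:asyptotic}; the remaining $r-1$ are the small eigenvalues to inspect.

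First I would compute the coefficients $v(n,1),\ldots,v(n,r)$ as exact integers by iterating the efficient recursion of Theorem~\ref{thm:vnkrecursion2}, starting from the trivial base $v(m,0)$. This is precisely the computation the paper performs up to $n=2^{36}$, and arbitrary-precision integer arithmetic is needed since $v(n,k)$ grows like $n(\log n)^{k-1}/(k-1)!$. Next I would find all roots of $q_n(y)$ numerically in multi-precision floating point (for example, by an Aberth--Ehrlich iteration), discard the two roots near $\pm\sqrt{n}$, and scan the rest for a $y$ with $\re(y) > 0$ or with $|1+y| > 1$. Guided by Vaughan's theorem that nontrivial eigenvalues accumulate at $1$ for large $n$, I would sweep $n$ upward until such a root appears; this is the stage that requires genuine computation rather than theory, and it is feasible only because Theorem~\ref{thm:vnkrecursion2} brings $n=2^{36}$ within reach.

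For any candidate pair $(n,y_0)$ I would then certify the counterexample rigorously, so that the conclusion does not rest on floating-point evidence. Concretely, I would fix a small closed disk $\overline{B}(y_0,\varepsilon)$ lying strictly inside $\{\re(y)>0\}$ (respectively $\{|1+y|>1\}$) and use either an interval-arithmetic Newton certificate or a Rouch\'e comparison against the linearization of $q_n$ at $y_0$ to prove that $q_n$ has a true root inside that disk. Since this root corresponds to a nontrivial eigenvalue $\lambda = 1+y$ of $C_n$ which cannot be one of the two large ones (whose rigorous location near $\pm\sqrt{n}$ follows from~\eqref{eqn:asyptotic}), it is a small eigenvalue violating the conjecture.

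The main obstacle is numerical conditioning: the coefficients $v(n,k)$ are astronomical and the roots of $q_n$ span scales from $O(\sqrt{n})$ down to $O(1)$, so root-finding in standard precision is unreliable and could produce spurious violations. Extended precision throughout, together with the final certification step via interval arithmetic or Rouch\'e, is essential to convert a computer search into a proof. A secondary difficulty is that Vaughan's theorem guarantees approach to $1$ but not the side of approach, so locating the first $n$ at which the approach actually crosses the critical curves $|1+y|=1$ or $\re(y)=0$ requires an empirical search rather than an a priori estimate.
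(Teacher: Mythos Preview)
Your proposal is essentially the same computational strategy the paper carries out: compute the integers $v(n,k)$ via the recursion of Theorem~\ref{thm:vnkrecursion2}, form $q_n(y)$, numerically locate its roots, and read off a small eigenvalue violating the conjecture (the paper exhibits $n=2^{36}$, where $\max|\lambda|\approx 1.0312$ and $\max\re(\lambda)\approx 1.00004$). The one substantive addition you make is the explicit certification step via interval arithmetic or Rouch\'e, which the paper omits; your version is therefore more rigorous as stated, though you have not actually executed the search and named a specific~$n$.
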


\begin{proof}
The characteristic polynomial $p_n(x)$ of the general matrix
$A_n$ was given in Theorem~\ref{thm:CharacteristicPolynomial}.
By implementing the recursive formula in
Theorem~\ref{thm:vnkrecursion2}, we were able to calculate the
characteristic polynomial for the special case $C_n$ for
relatively large values of $n$, such as $n=2^{36}$, within a
few hours on a desktop computer.

A table listing the maximum absolute value and real part of
small nontrivial eigenvalues of $C_n$ for $n=10^6$ and $n=2^r$
with $28 \leq r \leq 36$ is given below:
\[
\begin{array}{lcr|l|l}
n       &   &           & \max\{|\lambda|\} & \max\{\re(\lambda)\}   \\ \hline\hline
10^6    & = & 1,000,000   & 0.983108 & 0.983108 \\
 2^{28} & = & 268,435,456 & 0.998885 & 0.998739 \\
 2^{29} & = & 536,870,912 & 0.999120 & 0.998989 \\
 2^{30} & = & 1,073,741,824 & 0.999324 & 0.999206 \\
 2^{31} & = & 2,147,483,648 & 0.999501 & 0.999395 \\
 2^{32} & = & 4,294,967,296 & 0.999676 & 0.999560 \\
 2^{33} & = & 8,589,934,592 &  1.002646 & 0.999704 \\
 2^{34} & = & 17,179,869,184 & 1.005213 & 0.999829 \\
 2^{35} & = & 34,359,738,368 & 1.007423 & 0.999939 \\
 2^{36} & = & 68,719,476,736 & 1.031192 & 1.000036
\end{array}
\]
The example with $n=2^{36}$ provides a counter-example to both
parts of Conjecture~\ref{conjecture}. A sample of the
coefficients $v(n,k)$ of $p_n(x)$ for $n=10^6$, $n=2^{28}$, and
$n=2^{36}$ is given in Table~\ref{table:vnk}.
\end{proof}

\pagebreak

\begin{table}[!h]
\caption{Values of $v(n,k)$ for $n=10^{6}$, $n=2^{28}$, and $n=2^{36}$ }
\label{table:vnk}
\begin{tabular}{rrrr}
$k$ & $v(10^6,k)$ & $v(2^{28}, k)$ & $v(2^{36},k)$ \\ \hline
 1 & 999999 & 268435455 & 68719476735 \\
 2 & 11970035 & 4714411991 & 1587951104025 \\
 3 & 67120491 & 39550266080 & 17712699735807 \\
 4 & 233959922 & 210866000001 & 127006997038631 \\
 5 & 567345854 & 801946179797 & 657738684402616 \\
 6 & 1015020739 & 2314766752399 & 2620541404211325 \\
 7 & 1386286166 & 5267935378357 & 8354699452581663 \\
 8 & 1475169888 & 9693670870002 & 21888970237054221 \\
 9 & 1237295133 & 14675212443928 & 48028484118248949 \\
 10 & 822451796 & 18500845515388 & 89496511738284187 \\
 11 & 433656192 & 19585798031078 & 143118705146069804 \\
 12 & 180821164 & 17506983509953 & 197979547265239162 \\
 13 & 59146673 & 13254336924806 & 238336089820847725 \\
 14 & 14935574 & 8508754910066 & 250812663743567239 \\
 15 & 2829114 & 4628591443629 & 231467885026020936 \\
 16 & 383693 & 2128656115076 & 187727209728498411 \\
 17 & 34630 & 824357770148 & 133949812310943213 \\
 18 & 1672 & 267263904116 & 84103735312636462 \\
 19 & 20 & 71941723387 & 46433832280215021 \\
 20 &  & 15889930335 & 22505741596654059 \\
 21 &  & 2830811858 & 9551600816612963 \\
 22 &  & 396537923 & 3536981261202340 \\
 23 &  & 42162106 & 1137490727898326 \\
 24 &  & 3284753 & 315879734318303 \\
 25 &  & 177731 & 75228001661856 \\
 26 &  & 4707 & 15244074212812 \\
 27 &  & 55 & 2604780031507 \\
 28 &  & 1 & 371154513760 \\
 29 &  &  & 43388420848 \\
 30 &  &  & 4049932603 \\
 31 &  &  & 290175811 \\
 32 &  &  & 15487073 \\
 33 &  &  & 582143 \\
 34 &  &  & 9555 \\
 35 &  &  & 71 \\
 36 &  &  & 1
\end{tabular}
\end{table}

\section{Acknowledgment}
The author thanks Wayne Barrett who introduced him to the
problem discussed in this paper and for extensive conversations
about several of the results described here. Also the author
thanks Rodney Forcade for several helpful suggestions.

\end{document}